\def\blfootnote{\gdef\@thefnmark{}\@footnotetext}
\theoremstyle{plain}
\newtheorem*{theorem*}{Theorem}
\newtheorem{theorem}{Theorem}[section]
\newtheorem{lemma}[theorem]{Lemma}
\newtheorem{corollary}[theorem]{Corollary}
\theoremstyle{remark}
\theoremstyle{Acknowledgments}
\newtheorem*{main}{Main Theorem}
\theoremstyle{definition}
\def\mod{{\rm Mod}}
\begin{document}
\blfootnote{\textup{2000} \textit{Mathematics Subject Classification}:
57N05, 20F38, 20F05}
\blfootnote{\textit{Keywords}:
Mapping class groups, nonorientable surfaces, twist subgroup, torsion, generating sets}
\newenvironment{prooff}{\medskip \par \noindent {\it Proof}\ }{\hfill
$\square$ \medskip \par}
    \def\sqr#1#2{{\vcenter{\hrule height.#2pt
        \hbox{\vrule width.#2pt height#1pt \kern#1pt
            \vrule width.#2pt}\hrule height.#2pt}}}
    \def\square{\mathchoice\sqr67\sqr67\sqr{2.1}6\sqr{1.5}6}
\def\pf#1{\medskip \par \noindent {\it #1.}\ }
\def\endpf{\hfill $\square$ \medskip \par}
\def\demo#1{\medskip \par \noindent {\it #1.}\ }
\def\enddemo{\medskip \par}
\def\qed{~\hfill$\square$}

 \title[Torsion Generators of the Twist Subgroup] {Torsion Generators of the Twist Subgroup}

\author[T{\"{u}}l\.{i}n Altun{\"{o}}z,       Mehmetc\.{i}k Pamuk, and O\u{g}uz Y{\i}ld{\i}z ]{T{\"{u}}l\.{i}n Altun{\"{o}}z,    Mehmetc\.{i}k Pamuk, and O\u{g}uz Y{\i}ld{\i}z}

\address{Department of Mathematics, Middle East Technical University,
 Ankara, Turkey}
\email{atulin@metu.edu.tr}  \email{mpamuk@metu.edu.tr} \email{oguzyildiz16@gmail.com}


\begin{abstract}
We showed that the twist subgroup of the mapping class group of a closed connected nonorientable surface of genus $g\geq13$ 
can be generated by two involutions and an element of order $g$ or $g-1$ depending on whether $g$ is odd or even respectively.
\end{abstract}
\maketitle
  \setcounter{secnumdepth}{2}
 \setcounter{section}{0}
\section{Introduction}

Let $\Sigma_g$ denote a closed connected orientable surface of genus $g$. The mapping class group, $\mod(\Sigma_g)$,
is the group of the isotopy classes of orientation preserving diffeomorphisms of $\Sigma_g$.   
It is a classical result that $\mod(\Sigma_g)$ is generated by finitely many Dehn twists about nonseparating simple closed curves~\cite{de,H,l3}.  
The study of algebraic properties of mapping class group, finding small generating sets, generating sets with particular properties, is an active one leading to interesting developments. 
Wajnryb~\cite{w} showed that $\mod(\Sigma_g)$ can be generated by two elements given as a product of Dehn twists.  As the group is not abelian, this is the smallest 
possible.  Later, Korkmaz~\cite{mk2} showed that one of these generators can be taken as a Dehn twist, he also proved that $\mod(\Sigma_g)$ can be generated by two torsion 
elements. Recently, the third author showed that $\mod(\Sigma_g)$ is generated by two torsions of small orders~\cite{y1}.
 
Generating  $\mod(\Sigma_g)$ by involutions was first considered by McCarthy and Papadopoulus~\cite{mp}.  They showed that the group can be generated 
by infinitely many conjugates of a single involution (element of order two) for $g\geq 3$.     
In terms of generating by finitely many involutions, Luo~\cite{luo} showed that any Dehn twist about a nonseparating simple closed curve 
can be written as a product six involutions, which in turn implies that $\mod(\Sigma_g)$ can be generated by $12g+6$ involutions.  
Brendle and Farb~\cite{bf} obtained a generating set of six involutions for $g\geq3$. Following their work, Kassabov~\cite{ka} showed that 
$\mod(\Sigma_g)$ can be generated by four involutions if $g\geq7$.  Recently, Korkmaz~\cite{mk1} showed that $\mod(\Sigma_g)$ is generated by three involutions 
if $g\geq8$ and four involutions if $g\geq3$. Also, the third author improved his result showing that it is generated by three involutions if $g\geq6$~\cite{y2}.

The main aim of this paper is to find minimal generating sets of torsion elements for a particular subgroup, namely the twist subgroup, of the mapping class groups of nonorientable surfaces.
Let $N_g$ denote a closed connected nonorientable surface of genus $g$. The mapping class group,$\mod(N_g)$,  is defined to be the group of the isotopy classes of 
all diffeomorphisms of $N_g$.  Compared to orientable surfaces less is known about $\mod(N_g)$.  Lickorish~\cite{l1,l2} showed that it is generated by Dehn twists about two-sided simple
closed curves and a so-called $Y$-homeomorphism (or a crosscap slide). Chillingworth~\cite{c} gave a finite generating set for $\mod(N_g)$ that linearly depends on $g$. 
Szepietowski~\cite{sz2} proved that  $\mod(N_g)$ is generated by three elements and by four involutions.

The twist subgroup $\mathcal{T}_g$ of $\mod(N_g)$ is the group generated by Dehn twists about two-sided simple closed curves.
The group $\mathcal{T}_g$ is a subgroup of index $2$ in $\mod(N_g)$ ~\cite{l2}.  Chillingworth~\cite{c} showed that $\mathcal{T}_g$ can be generated by finitely many Dehn twists. 
Stukow~\cite{st2} obtained a finite presentation for $\mathcal{T}_g$ with $(g+2)$ Dehn twist generators. Later  Omori~\cite{om} reduced the number of Dehn twist generators to 
$(g+1)$ for $g\geq4$. If it is not required that all generators are Dehn twists, Du~\cite{du} obtained a generating set consisting of three elements, two involutions and an element of 
order $2g$ whenever $g\geq5$ and odd. 


In the present paper, we prove that $\mathcal{T}_g$ can  be generated 
by two involutions and an element of order $g$ or $g-1$ depending on the parity of $g$ (see Theorems~\ref{rodd} and \ref{reven}).  
\begin{main}
The twist subgroup $\mathcal{T}_{g}$ can be generated by two involutions and an element of order $g$ or $g-1$ depending on whether $g$ is odd or even respectively.
\end{main}

Before we finish the introduction, let us point out that the twist subgroup $\mathcal{T}_{g}$ admits an epimorphism onto the automorphism group of $H_1(N_g;\mathbb{Z}_2)$ 
preserving the $\pmod{2}$ intersection pairing~\cite{mpin}, which is isomorphic to (see~\cite{mk3} and~\cite{sz3})
\begin{eqnarray}
 \begin{cases} Sp(2h;\mathbb{Z}_2)&\text{if $g=2h+1$,} \\ 
 Sp(2h;\mathbb{Z}_2)\ltimes \mathbb{Z}_{2}^{2h+1}&\text{if $g=2h+2$.}  \end{cases}
\nonumber 
\end{eqnarray}
Hence, the action of mapping classes on $H_1(N_g;\mathbb{Z}_2)$ induces an epimorphism from $\mathcal{T}_g$ to 
$Sp\big(2\lfloor\dfrac{g-1}{2}\rfloor;\mathbb{Z}_2\big)$, which immediately implies the following corollary:
\begin{corollary}
The symplectic group  $Sp\big(2\lfloor\dfrac{g-1}{2}\rfloor;\mathbb{Z}_2\big)$ can be generated by two involutions and an element of order $g$ or $g-1$ 
depending on whether $g$ is odd or even respectively.
\end{corollary}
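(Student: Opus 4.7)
The plan is to derive the corollary directly from the Main Theorem via the epimorphism stated in the preceding paragraph. Write $\Phi:\mathcal{T}_g \twoheadrightarrow \Sp(2\lfloor (g-1)/2\rfloor;\Z_2)$ for the surjection induced by the action of the twist subgroup on $H_1(N_g;\Z_2)$ modulo the radical of the mod-$2$ intersection pairing. Let $\alpha, \beta \in \mathcal{T}_g$ be the two involutions and $\gamma \in \mathcal{T}_g$ the element of order $g$ (when $g$ is odd) or $g-1$ (when $g$ is even) supplied by Theorems~\ref{rodd} and~\ref{reven}. Since $\Phi$ is surjective, the three images $\Phi(\alpha),\Phi(\beta),\Phi(\gamma)$ automatically generate the target symplectic group.

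All that remains is to verify that the orders of the generators are preserved under $\Phi$. The image of an involution under a group homomorphism is either an involution or trivial, so it suffices to show $\Phi(\alpha)\neq 1$ and $\Phi(\beta)\neq 1$. Similarly, $\Phi(\gamma)$ has order dividing the order of $\gamma$, and one must show this order is not a proper divisor of $g$ (respectively $g-1$). Both checks reduce to a direct computation of the induced action on a symplectic mod-$2$ basis of $H_1(N_g;\Z_2)/\mathrm{rad}$, using the explicit expressions of $\alpha,\beta,\gamma$ as products of Dehn twists provided in the proofs of Theorems~\ref{rodd} and~\ref{reven}.

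The main obstacle is this last step: \emph{a priori} a torsion element of $\mathcal{T}_g$ can collapse to a smaller order under $\Phi$ if it acts trivially on a nontrivial invariant subspace of homology. In the construction of the Main Theorem, however, the cyclic generator $\gamma$ is expected to arise from a rotational symmetry of a symmetric model of $N_g$ that cyclically permutes the standard homology classes, so its symplectic image should contain a cycle of length $g$ or $g-1$ on this basis and hence have the full required order; an analogous observation handles $\alpha$ and $\beta$. Once these short matrix verifications are carried out, the corollary follows immediately by pushing the generating set of $\mathcal{T}_g$ forward through $\Phi$.
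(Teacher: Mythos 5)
Your proposal follows essentially the same route as the paper: the corollary is obtained by pushing the generating set of Theorems~\ref{rodd} and~\ref{reven} forward through the epimorphism onto $Sp\big(2\lfloor\frac{g-1}{2}\rfloor;\mathbb{Z}_2\big)$, which the paper treats as an immediate consequence and does not elaborate. The order-preservation check you flag is not carried out in the paper either, and your sketched justification (the rotation $T$ cyclically permutes the crosscap classes, so its symplectic image retains full order, and the involutions act nontrivially on mod-$2$ homology) is the correct reason it causes no trouble.
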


\medskip

\noindent
\textit{Acknowledgments.} The authors thank  Mustafa Korkmaz for helpful conversations and  Tara Brendle for her  helpful comments on a previous version of this paper.   
The first author was partially supported by the Scientific and Technologic Research Council of Turkey (T\"{U}B\.{I}TAK)[grant number 117F015].


\par  
\section{Background and Results on Mapping Class Groups} \label{S2}
 Let $N_g$ be a closed connected nonorientable surface of genus $g$. 
 Note that the {\textit{genus}} for a nonorientable surface is the number 
 of projective planes in a connected sum decomposition. The {\textit{mapping class group}} 
 $\mod(N_g)$ of the surface $N_g$ is defined to be the group of the isotopy classes of 
 diffeomorphisms $N_g \to N_g$. Throughout the paper we do not distinguish a 
 diffeomorphism from its isotopy class. For the composition of two diffeomorphisms, we
use the functional notation; if $g$ and $h$ are two diffeomorphisms, 
the composition $gh$ means that $h$ acts on $N_g$ first.\\
\indent
A simple closed curve on a nonorientable surface $N_g$ is said to be 
\textit{one-sided} if a regular neighbourhood of it is homeomorphic to 
a M\"{o}bius band. It is called \textit{two-sided} if a regular neighbourhood of 
it is homeomorphic to an annulus. If $a$ is a two-sided simple closed 
curve on $N_g$, to define the Dehn twist $t_a$, we need to fix one of two possible 
orientations on a regular neighbourhood of $a$ (as we did for the 
curve $a_1$ in Figure~\ref{G}). Following ~\cite{mk1} the right-handed 
Dehn twist $t_a$ about $a$ will be denoted by the corresponding capital 
letter $A$.

Now, let us recall the following basic properties of Dehn twists which we use frequently in the remaining of the paper. Let $a$ and $b$ be 
two-sided simple closed curves on $N_g$ and $f\in \mod(N_g)$.
\begin{itemize}
\item \textbf{Commutativity:} If $a$ and $b$ are disjoint, then $AB=BA$.
\item \textbf{Conjugation:} If $f(a)=b$, then $fAf^{-1}=B^{s}$, where $s=\pm 1$ 
depending on whether $f$ is orientation preserving or orientation reversing on a 
neighbourhood of $a$ with respect to the chosen orientation.
\end{itemize}

\begin{figure}[h]
\begin{center}
\scalebox{0.3}{\includegraphics{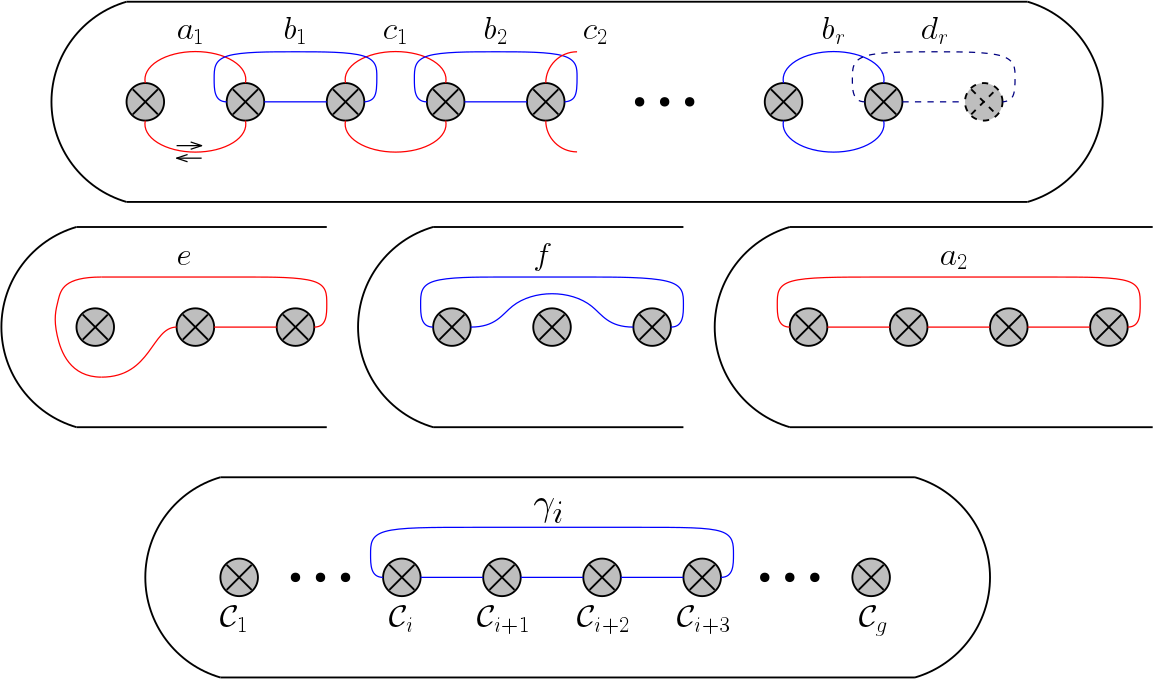}}
\caption{The curves $a_1,a_2,b_i,c_i,e,f$ and $\gamma_i$ on the surface $N_g$.}
\label{G}
\end{center}
\end{figure}
\par
\begin{figure}[h]
\begin{center}
\scalebox{0.17}{\includegraphics{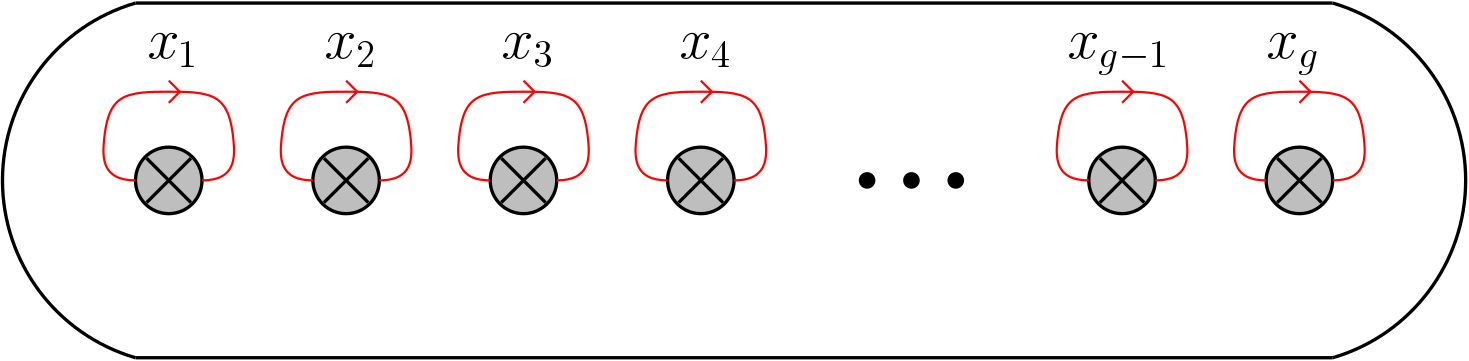}}
\caption{Generators of $H_1(N_g;\mathbb{R})$.}
\label{H}
\end{center}
\end{figure}

Consider the surface $N_g$ shown in Figure~\ref{G}.
The Dehn twist generators of Omori can be given as follows (note that we do not have the curve $d_r$ when $g$ is odd).
\begin{theorem}\cite{om}\label{thm1}
The twist subgroup $\mathcal{T}_g$ is generated by the following $(g+1)$ Dehn twists
\begin{enumerate}
\item $A_1,A_2,B_1,\ldots, B_r$, $C_1,\ldots, C_{r-1}$ and $E$ if $g=2r+1$ and 
\item $A_1,A_2,B_1,\ldots, B_r$, $C_1,\ldots, C_{r-1}$, $D_r$ and $E$ if $g=2r+2$.
\end{enumerate}
\end{theorem}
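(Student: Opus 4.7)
Since Omori's theorem reduces Stukow's $(g+2)$-twist generating set of $\mathcal{T}_g$ by a single element, the plan is to identify the ``redundant'' generator in Stukow's set and express it as a word in the twists listed above, working under the standing assumption $g\geq 4$. Let $\mathcal{G}$ denote the subgroup of $\mathcal{T}_g$ generated by $A_1, A_2, B_1,\ldots, B_r$, $C_1,\ldots, C_{r-1}, E$ (and $D_r$ when $g=2r+2$). After a suitable change of generators, Stukow's theorem can be reformulated as the statement that $\mathcal{G}$ together with one further Dehn twist $T$, about an explicitly drawn curve such as $f$ in Figure~\ref{G}, generates $\mathcal{T}_g$; it therefore suffices to prove $T\in\mathcal{G}$.

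To produce $T$ inside $\mathcal{G}$, I would study the orientable subsurface $S\subset N_g$ obtained as a regular neighborhood of the chain $a_1\cup b_1\cup c_1\cup b_2\cup\cdots$, extended by $e$ and, when $g$ is even, by $d_r$. By construction $\partial S$ consists of curves of the form $\varphi(f)$ for some $\varphi\in\mathcal{G}$, and the classical chain relation inside the orientable surface $S$ reads
\begin{equation*}
(T_{c_1}T_{c_2}\cdots T_{c_k})^{k+1} = T_{\partial_1}T_{\partial_2}
\end{equation*}
when the chain has odd length $k$, with an analogous identity involving a single boundary twist and squared exponent when $k$ is even. All the chain twists on the left lie in $\mathcal{G}$; solving for the relevant boundary twist and then conjugating by $\varphi^{-1}\in\mathcal{G}$ yields the desired expression $T\in\mathcal{G}$.

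The principal obstacle is geometric: the orientable subsurface $S$ must be chosen so that its nonorientable complement in $N_g$ interacts correctly with the chain, and one has to verify via the Alexander method that the boundary components produced by the chain relation actually have the isotopy class of $f$ rather than of some nearby curve whose twist lies outside $\mathcal{G}$. A secondary subtlety is the parity split: in the even case the extra generator $D_r$ extends the available chain by one curve, so the relation yields $T$ directly; in the odd case the chain terminates earlier and one likely needs an auxiliary lantern or two-holed torus identity to close the final step, which is why the even and odd statements are listed separately. Finally, the hypothesis $g\geq 4$ is essential, since for $g\leq 3$ the surface $N_g$ is too small to accommodate the required orientable subsurface and the strategy collapses.
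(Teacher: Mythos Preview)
The paper does not prove this theorem at all: it is quoted verbatim from Omori's paper \cite{om} and used as a black box, so there is no ``paper's own proof'' to compare against. Your proposal is therefore not competing with anything in the present paper; it is a sketch of how one might reprove Omori's result from Stukow's earlier presentation.

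As a sketch it is plausible in outline---Omori does indeed eliminate one of Stukow's generators via relations among Dehn twists---but what you have written is not yet a proof. Several of the load-bearing claims are asserted without verification: that the boundary of your subsurface $S$ is actually isotopic to (a $\mathcal{G}$-conjugate of) $f$; that the chain relation applies in the form you write after you have adjoined $e$ and possibly $d_r$, which do not sit in a single chain with the $a_i,b_i,c_i$; and most tellingly, your own remark that in the odd case ``one likely needs an auxiliary lantern or two-holed torus identity'' signals that the argument is incomplete. A correct write-up must specify exactly which curves form the chain, compute the boundary components explicitly, and carry out the missing relation in the odd case rather than gesture at it. Until those steps are filled in, this is a strategy rather than a proof.
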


\noindent
Consider a basis $\lbrace x_1, x_2. \ldots, x_{g-1}\rbrace$ for $H_1(N_g; \mathbb{R})$ 
such that the curves $x_i$ are one-sided and disjoint as in Figure~\ref{H}. It is known that every
 diffeomorphism $f: N_g \to N_g$ induces a linear map 
 $f_{\ast}: H_1(N_g;\mathbb{R}) \to H_1(N_g;\mathbb{R})$. Therefore, one can
  define a homomorphism $D: \mod(N_g) \to \mathbb{Z}_{2}$ by $D(f)=\textrm{det}(f_{\ast})$. 
  The following lemma from~\cite{l1} tells when a mapping class falls into the twist subgroup $\mathcal{T}_g$.

\begin{lemma}\label{lem1} Let $f\in  \mod(N_g)$. Then  $D(f)=1$ if $f\in \mathcal{T}_g$ and
$D(f)=-1$ if $f \not \in \mathcal{T}_g$.
\end{lemma}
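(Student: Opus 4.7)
The plan is to show that $D\colon\mod(N_g)\to\{\pm 1\}$ is a surjective homomorphism whose kernel is exactly $\mathcal{T}_g$. The excerpt has already recorded (from~\cite{l2}) that $\mathcal{T}_g$ has index two in $\mod(N_g)$, so once I know $\mathcal{T}_g\subseteq\ker D$ and $D$ is nontrivial, the kernel must coincide with $\mathcal{T}_g$ and both assertions of the lemma follow at once.

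First I would verify that $D$ is a well-defined homomorphism with values in $\{\pm 1\}$: any diffeomorphism $f$ of $N_g$ induces an automorphism of $H_1(N_g;\mathbb{R})$ preserving an integer lattice, so $\det(f_*)\in\{\pm 1\}$, and multiplicativity of $\det$ does the rest. Next I would establish $\mathcal{T}_g\subseteq\ker D$ by checking $D(t_c)=1$ on a generating set; by Theorem~\ref{thm1} it is enough to treat an arbitrary Dehn twist $t_c$ about a two-sided simple closed curve $c$. Two-sidedness lets us choose an orientation of a tubular annular neighborhood of $c$, which makes the algebraic intersection number $y\cdot [c]$ with any cycle $y$ well-defined in $\mathbb{Z}$. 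The standard Picard--Lefschetz computation, performed inside the oriented annulus and pushed forward via inclusion, then yields the transvection formula $(t_c)_*(y)=y+(y\cdot [c])\,[c]$ on $H_1(N_g;\mathbb{R})$. A transvection is the identity plus a rank-one nilpotent operator and so has determinant $1$; if $[c]=0$ in $H_1(N_g;\mathbb{R})$ then $(t_c)_*$ is already the identity. Either way, $D(t_c)=1$.

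Finally, to force $\ker D=\mathcal{T}_g$, I would exhibit a single $\varphi\in\mod(N_g)$ with $D(\varphi)=-1$. A natural choice is a $Y$-homeomorphism (crosscap slide), known from Lickorish~\cite{l1} to lie outside $\mathcal{T}_g$, supported in a Klein-bottle-with-boundary neighborhood $K$ of a one-sided curve $\mu$ together with a two-sided curve meeting $\mu$ once; a direct geometric computation shows that $\varphi_*$ sends the homology class of $\mu$ to its negative while fixing the basis elements of Figure~\ref{H} supported outside $K$, whence $\det(\varphi_*)=-1$. The main obstacle I anticipate is in step two, namely justifying the transvection formula for $(t_c)_*$ in the nonorientable setting, where no global integer-valued intersection pairing on $H_1$ is available; the way around this is precisely to exploit two-sidedness of $c$, carry out the calculation inside its oriented annular neighborhood, and transfer the conclusion to global homology via the inclusion-induced map.
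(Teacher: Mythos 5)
The paper itself gives no proof of this lemma: it is quoted directly from Lickorish~\cite{l1}, so there is no internal argument to compare yours against. Your proposal is the standard proof and its structure is sound: $D$ is a homomorphism into $\{\pm1\}$ because $f_*$ preserves the lattice $H_1(N_g;\mathbb{Z})/\mathrm{torsion}\cong\mathbb{Z}^{g-1}$ inside $H_1(N_g;\mathbb{R})$; a Dehn twist about a two-sided curve $c$ acts by the transvection $y\mapsto y+\langle y,c\rangle[c]$, where the co-orientation furnished by two-sidedness makes $\langle\,\cdot\,,c\rangle$ a well-defined homomorphism on $H_1$, and since $\langle [c],c\rangle=0$ the operator is unipotent, so $D(t_c)=1$; this already gives $\mathcal{T}_g\subseteq\ker D$ straight from the definition of $\mathcal{T}_g$ as the group generated by such twists (you do not need Theorem~\ref{thm1} for this). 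Combined with the index-two fact from~\cite{l2}, a single element of determinant $-1$ then pins down $\ker D=\mathcal{T}_g$ and yields both assertions.

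The step you should tighten is the determinant of the crosscap slide $\varphi$. Its supporting Klein bottle with boundary $K$ contains \emph{two} of the basis classes of Figure~\ref{H}, say $x_1=[\mu]$ and $x_2$, so ``$\varphi_*$ negates $[\mu]$ and fixes the basis elements supported outside $K$'' only accounts for $x_1$ and $x_3,\dots,x_{g-1}$; you must still compute $\varphi_*(x_2)$, and it is \emph{not} fixed. Since $\varphi$ is the identity on $\partial K$ and $[\partial K]=2(x_1+x_2)$ in the rank-two free group $H_1(K;\mathbb{Z})$, the relation $\varphi_*(x_1)=-x_1$ forces $\varphi_*(x_2)=x_2+2x_1$. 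The matrix of $\varphi_*$ in the basis $x_1,\dots,x_{g-1}$ is then triangular with diagonal $(-1,1,\dots,1)$, so $\det(\varphi_*)=-1$ and your conclusion stands; but as written the computation omits exactly the basis vector carrying the nontrivial off-diagonal entry, so the determinant claim is not yet justified by what you state.
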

\subsection{A generating set for $\mathcal{T}_g$}
We start with presenting a generating set for $\mathcal{T}_g$. The diffeomorphism $T$ is the rotation by $\frac{2\pi}{g}$
or $\frac{2\pi}{g-1}$ as shown on the right hand sides of Figures~\ref{MO} and ~\ref{ME}, respectively. Note that the rotation $T$ satisfies $D(T)=1$, which implies that $T$ belongs to $\mathcal{T}_g$.
\begin{figure}[h]
\begin{center}
\scalebox{0.25}{\includegraphics{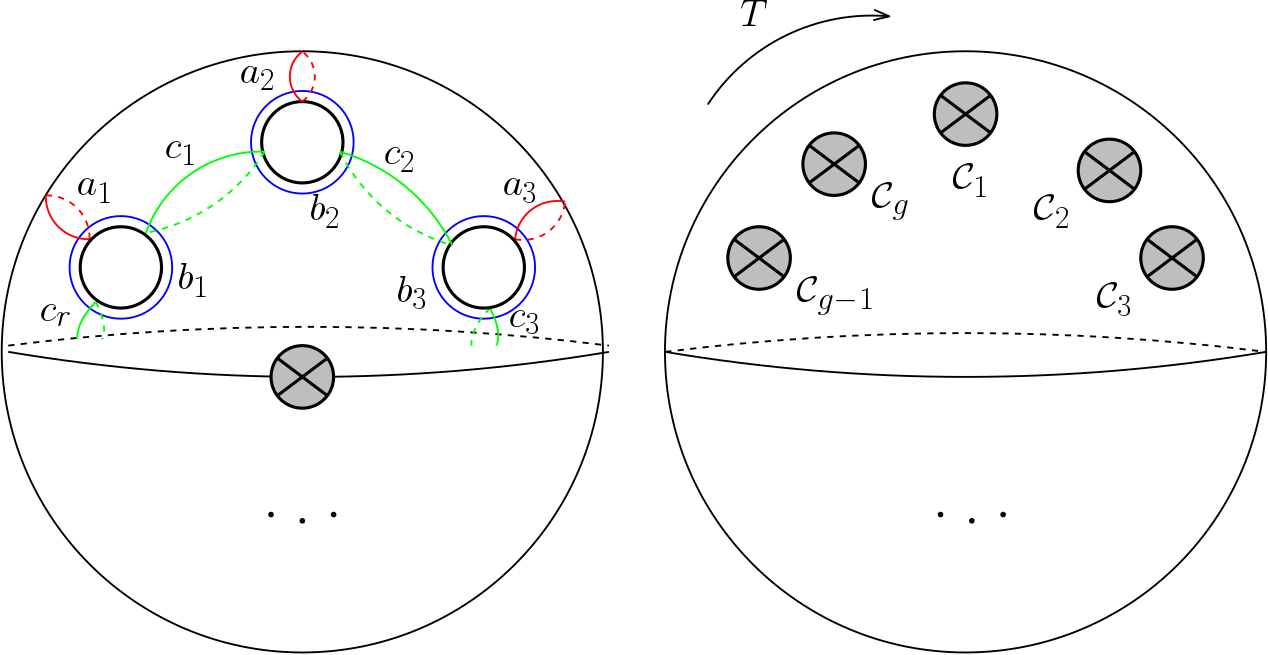}}
\caption{The models for $N_g$ if $g=2r+1$.}
\label{MO}
\end{center}
\end{figure}

\begin{figure}[h]
\begin{center}
\scalebox{0.4}{\includegraphics{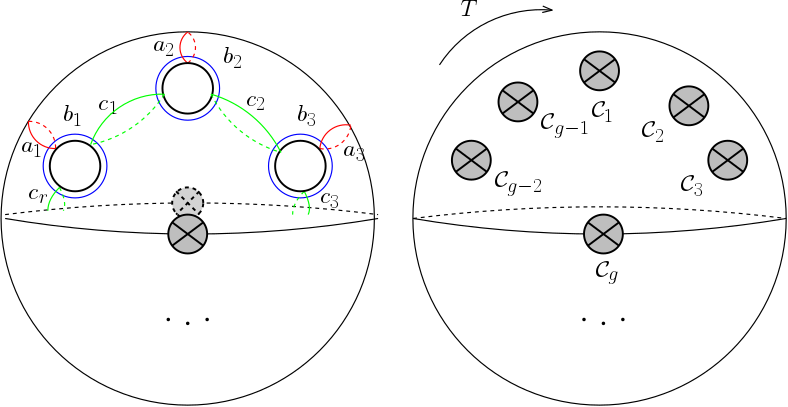}}
\caption{The models for $N_g$ if $g=2r+2$.}
\label{ME}
\end{center}
\end{figure}
\begin{theorem}\label{t1}
The twist subgroup $\mathcal{T}_g$ is generated by the elements 
\begin{enumerate}
\item $T,A_1A_{2}^{-1},B_1B_{2}^{-1}$ and $E$ if $g=2r+1$ and $r\geq 3$,
\item $T,A_1A_{2}^{-1},B_1B_{2}^{-1}, D_r$ and $E$ if $g=2r+2$ and $r\geq 3$.
\end{enumerate}
 \end{theorem}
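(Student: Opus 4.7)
The strategy is to invoke Omori's theorem (Theorem \ref{thm1}) and reduce the proof to showing that each Dehn twist generator there lies in the subgroup $H$ generated by the proposed set. The primary tool will be the conjugation formula: if $f(a)=b$, then $f A f^{-1}=B^{\pm 1}$, where the sign depends on whether $f$ preserves or reverses a chosen orientation near $a$.

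First, I would analyze how the rotation $T$ acts on the curves in Figure~\ref{G}. From the symmetry of the models in Figures~\ref{MO} and~\ref{ME}, $T$ cyclically permutes the curves $b_1,\ldots,b_r$ and correspondingly the $c_1,\ldots,c_{r-1}$, while its action on $a_1,a_2$ is determined by the picture. Conjugating $B_1 B_2^{-1}$ by successive powers of $T$ therefore produces every product $B_i B_{i+1}^{-1}$ (with indices read cyclically through the $T$-orbit) and analogous products linking $B_i$ with $C_j$ or with $E$; similarly, conjugating $A_1 A_2^{-1}$ provides further products of two Dehn twists belonging to $H$. Multiplying such consecutive differences telescopes, so all composites of the form $B_i B_j^{-1}$ are immediately available in $H$, and exploiting the full $T$-orbit (of order $g$ or $g-1$) produces relations that, combined with $E$, can be solved for an individual twist.

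The main obstacle is the step from a product $XY^{-1}$ to its individual factors $X$ and $Y$: because Dehn twists about curves that intersect do not commute, one cannot isolate one factor purely algebraically from the product alone. To break the symmetry I would search for one Dehn twist outside the $T$-orbit of $a_1\cup a_2$ or $b_1\cup b_2$ (a natural candidate is $E$ itself, or a twist obtained as a commutator of $E$ with a $T$-conjugate of $A_1 A_2^{-1}$) whose geometric interaction with these products forces a single Dehn twist into $H$. Once one element of the $A$-pair or one $B_i$ is shown to lie in $H$ individually, the remaining generators of Omori's list follow by conjugation with the appropriate power of $T$, and the proof is complete. The even-genus case is treated by the same argument, with $D_r$ already present in the proposed generating set.
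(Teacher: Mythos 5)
Your overall framework matches the paper's: invoke Omori's theorem, keep track of which differences $XY^{-1}$ of Dehn twists lie in the subgroup, and generate new ones by conjugating with powers of $T$ (the paper formalizes this bookkeeping as an equivalence relation $\mathcal{G}=\{(a,b): AB^{-1}\in G\}$ on two-sided nonseparating curves, with symmetry, transitivity and $G$-invariance). You also correctly isolate the real difficulty: nothing in this calculus of differences ever produces an individual Dehn twist. But at exactly that point your proposal stops being a proof. Saying you ``would search for one Dehn twist \ldots whose geometric interaction with these products forces a single Dehn twist into $H$'' names the obstacle without overcoming it, and the specific candidates you float do not work as stated: a commutator of $E$ with a $T$-conjugate of $A_1A_2^{-1}$ is not in general a Dehn twist, and while $E$ is indeed a single twist already in the subgroup, using it requires exhibiting a difference of the form $XE^{-1}$ in $H$ (equivalently, an element of $H$ carrying $e$ to another curve), which you never construct and which would demand the same kind of explicit curve-chasing you have omitted.

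The paper's missing ingredient is the lantern relation. After a fairly long explicit computation it gets the elements $A_2C_2^{-1}$, $D_1A_1^{-1}$ and $D_2C_1^{-1}$ into $G$ (the curves $d_1,d_2$ come from Korkmaz's configuration, not from the $T$-orbit of the original generators), and then the lantern relation yields
\[
A_3=(A_2C_{2}^{-1})(D_1A_{1}^{-1})(D_2C_{1}^{-1}),
\]
so the single twist $A_3$ lies in $G$. From there $B_3=A_3(B_3B_{1}^{-1})A_3(B_1B_{3}^{-1})A_{3}^{-1}$ (a braid-relation identity whose right-hand side is visibly in $G$), and conjugating $B_3$ by powers of $T$ gives $A_1,B_1,C_1,\ldots,B_r$, with $A_2=(A_2A_1^{-1})A_1$; this is the ``conjugate by $T$ once you have one twist'' endgame you anticipated. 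So your sketch is right in outline and in its diagnosis, but it is missing the one idea (the lantern relation, together with the concrete chain of difference elements needed to feed it) that makes the argument go through; as written it does not constitute a proof.
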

\begin{proof}
Let $G$ be the subgroup of $\mathcal{T}_g$ generated by the following set 
\[
G=\left\{\begin{array}{lll}
\lbrace T,A_1A_{2}^{-1},B_1B_{2}^{-1},E \rbrace & \textrm{if} & g=2r+1,\\
\lbrace T,A_1A_{2}^{-1},B_1B_{2}^{-1},D_r,E \rbrace & \textrm{if} & g=2r+2,\\
\end{array}\right.
\]
where $r\geq3$.
It follows from Theorem~\ref{thm1} that we only need to prove that $G$ contains the elements $A_1,A_2,B_i$ and $C_j$ shown in Figures~\ref{MO} and~\ref{ME} where $i=1,\ldots,r$ and $j=1,\ldots,r-1$. (We use the explicit homeomorphism constructed in \cite[Section $3$]{st1} to identify the models in these figures.)\\
\noindent
Let $\mathcal{S}$ denote the set of isotopy classes of two-sided non-separating 
simple closed curves on $N_g$. Define a subset $\mathcal{G}$ of $\mathcal{S}\times \mathcal{S}$ 
as 
\[
\mathcal{G} =\lbrace(a,b): AB^{-1}\in G \rbrace.
\]
Using the arguments similar to the proof of ~\cite[Theorem $5$]{mk1}, the set $\mathcal{G}$ satisfies
\begin{itemize}
	\item if $(a,b)\in \mathcal{G}$, then $(b,a)\in \mathcal{G}$ (symmetry),
	\item if $(a,b) \ \textrm{and} \ (b,c)\in \mathcal{G}$, then $(a,c)\in \mathcal{G}$ (transitivity)  and
	\item if $(a,b)\in \mathcal{G}$ and $H\in G$ then $(H(a),H(b))\in \mathcal{G}$ ($G$-invariance).
	\end{itemize}
Hence, $\mathcal{G}$ defines an equivalence relation on $\mathcal{S}$. \\
\noindent
We begin by showing that $B_iC_{j}^{-1}$ is contained in $G$ for all $i,j$. It will follow from the definition of $G$ and from the fact that $T(b_1,b_2)=(c_1,c_2)$, we have $C_1C_{2}^{-1}$
is in $G$ (here, we use the notation $f(a,b)$ to denote $(f(a),f(b))$). Also, by conjugating  $C_1C_{2}^{-1}$ with powers of $T$, one can show that the elements $B_{i}B_{i+1}^{-1}$ and $C_{i}C_{i+1}^{-1}$ are contained in $G$. Moreover, the subgroup $G$ contains the elements $B_{i}B_{j}^{-1}$ and $C_{i}C_{j}^{-1}$ by the transitivity. To start with, since $B_2B_{3}^{-1}\in G$ and it is easy to check that 
\[
B_2B_{3}^{-1}A_2A_{1}^{-1}(b_2,b_3)=(a_2,b_3),
\]
so that $A_2B_{3}^{-1}$ is contained in the subgroup $G$. We have
\[
	 (A_1A_{2}^{-1})(A_2B_{3}^{-1})(B_3B_{2}^{-1})=A_{1}B_{2}^{-1}\in G,
\]
since each of the factors is contained in $G$. Hence, $T(a_1,b_2)=(b_1,c_2)$ implies that $B_1C_{2}^{-1}$ is also in $G$. Now, the subgroup $G$ contains the element
\[
B_1C_{1}^{-1}=(B_1C_{2}^{-1})(C_2C_{1}^{-1}).
\]
Therefore, the elements $B_iC_{i}^{-1}$ is contained in $G$ by conjugating with powers of $T$ for all $i=1,\ldots,r-1$. It follows from the transitivity that $B_iC_{j}^{-1}$ is in $G$. Note that, we have
\begin{itemize}
	\item $(A_1B_{2}^{-1})(B_2C_{1}^{-1})=A_1C_{1}^{-1}$,
	\item$(C_1A_{1}^{-1})(A_1A_{2}^{-1})=C_1A_{2}^{-1}$, and
	\item$(C_2C_{1}^{-1})(C_1A_{1}^{-1})=C_2A_{1}^{-1}$
	\end{itemize}
	from which it follows that the elements $A_1C_{1}^{-1}$, $C_1A_{2}^{-1}$ and $C_2A_{1}^{-1}$ belong to $G$. It can also be shown that
\[
(B_2A_{1}^{-1})(C_1A_{2}^{-1})(C_2A_{1}^{-1})(b_2,a_1)=(d_1,a_1)
\]
and
\[
(A_1B_{2}^{-1})(A_1C_{1}^{-1})(A_1C_{2}^{-1})(A_1B_{2}^{-1})(a_2,a_1)=(d_2,a_1)
\]
so that $G$ contains $D_1A_{1}^{-1}$ and $D_2A_{1}^{-1}$ (here, the curves $d_1$ and $d_2$ are shown in \cite[Figure $1$]{mk1}). Also, we have
\[
(D_2A_{1}^{-1})(A_1C_{1}^{-1})=D_2C_{1}^{-1} \in G.
\]
By similar arguments as in the proof of ~\cite[Theorem $5$]{mk1}, the lantern relation implies the following identity
\[
A_3=(A_2C_{2}^{-1})(D_1A_{1}^{-1})(D_2C_{1}^{-1} ).
\]
Since the subgroup $G$ contains each factor on the right hand side, the element $A_3$ belongs to $G$. It follows from 
\[
B_3=A_3(B_3B_{1}^{-1})A_3(B_1B_{3}^{-1})A_{3}^{-1}
\]
that $B_3$ is also contained in $G$. By conjugating $B_3$ with the powers of $T$, we get $A_1,B_1,C_1,\ldots B_{r-1},C_{r-1}$ and $B_r$
are all contained in $G$. Moreover, 
\[
A_2=(A_2A_{1}^{-1})A_1 \in G.
\]
Therefore, we conclude that $G=\mathcal{T}_g$.
\end{proof}

\begin{figure}[h]
\begin{center}
\scalebox{0.215}{\includegraphics{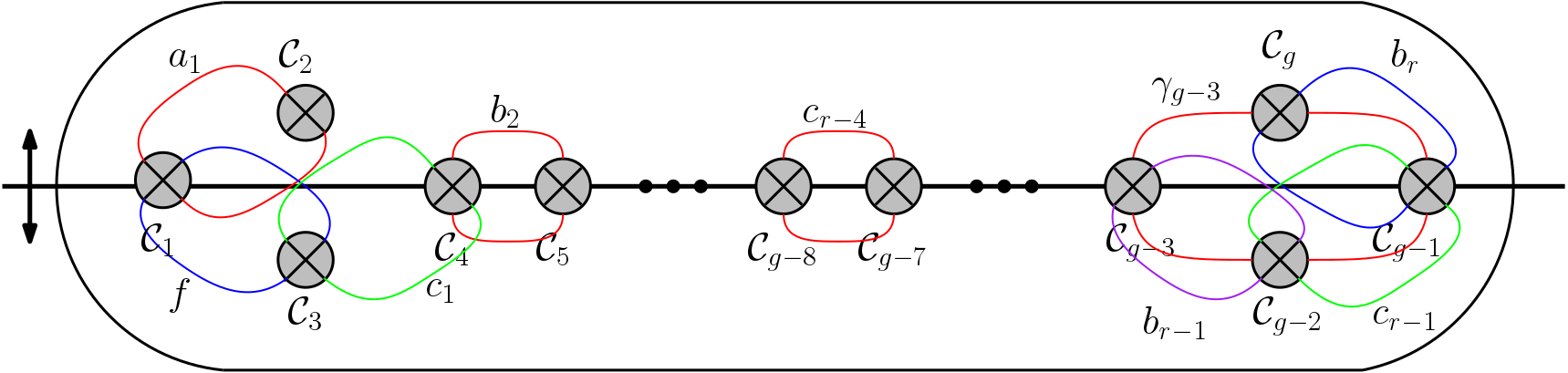}}
\caption{The involution $\sigma$ if $g=2r+1$.}
\label{sigmao}
\end{center}
\end{figure}

\begin{figure}[h]
\begin{center}
\scalebox{0.35}{\includegraphics{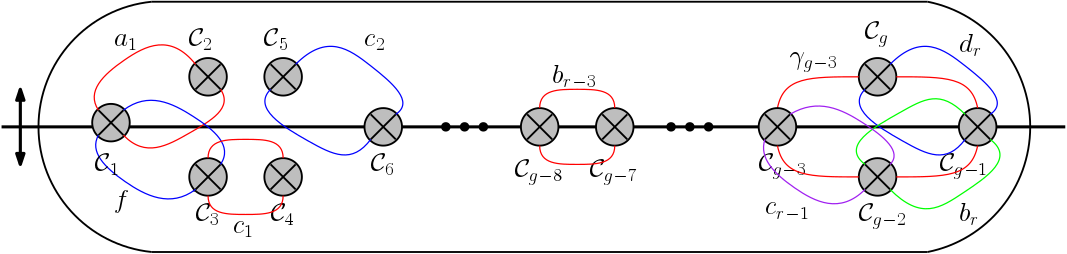}}
\caption{The involution $\sigma$ if $g=2r+2$.}
\label{sigmae}
\end{center}
\end{figure}
\indent

 We consider the surface $N_g$ where $g$-crosscaps are 
distributed on the sphere as in Figures~\ref{sigmao} and~\ref{sigmae}. If $g=2r+1\geq 13$, there is a 
reflection, $\sigma$, of the surface $N_g$ in the $xy$-plane such that 
\begin{itemize}
	\item $\sigma(a_1)=f$, $\sigma(b_r)=c_{r-1}$,
	\item $\sigma(x_2)=x_3$, $\sigma(x_{g})=x_{g-2}$ and
	\item $\sigma(x_i)=x_i$ if $i=4,\ldots,g-3$ or $i=1,g-1$.
\end{itemize}
with reverse orientation as in Figure~\ref{sigmao}. (Recall that $x_i$'s are the generators of $H_1(N_g;\mathbb{R})$ 
as shown in Figure~\ref{H}.) If $g=2r+2\geq 14$, there is a 
reflection, $\sigma$, of the surface $N_g$ in the $xy$-plane such that 
\begin{itemize}
	\item $\sigma(a_1)=f$, $\sigma(b_r)=d_{r}$,
	\item $\sigma(x_2)=x_3$, $\sigma(x_3)=x_4$, $\sigma(x_{g})=x_{g-2}$ and
	\item $\sigma(x_i)=x_i$ if $i=6,\ldots,g-3$ or $i=1,g-1$.
\end{itemize}
with reverse orientation as in Figure~\ref{sigmae}. Note that in both cases the reflection $\sigma$ in in $\mathcal{T}_g$ since $D(\sigma)=1$ for $g\geq13$.

Now, for the remaining part of the paper, let $\Gamma_{i}$ denote the right handed Dehn twist about the curve $\gamma_i$ shown in Figure~\ref{G}.

\begin{theorem}\label{rodd}
For odd $g\geq13$, the twist subgroup $\mathcal{T}_{g}$ is generated by the 
three elements  $T,\sigma$ and $\sigma\Gamma_{g-3}C_{\frac{g-9}{2}}^{-1}$.
\end{theorem}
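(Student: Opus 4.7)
The plan is to check that the subgroup $H:=\langle T,\sigma,\sigma\Gamma_{g-3}C_{(g-9)/2}^{-1}\rangle$ contains the four generators $T,A_1A_2^{-1},B_1B_2^{-1},E$ of $\mathcal{T}_g$ supplied by Theorem~\ref{t1}. Since $T$ is built in and $\sigma\in H$, the product
$\sigma\cdot\bigl(\sigma\Gamma_{g-3}C_{(g-9)/2}^{-1}\bigr)=\Gamma_{g-3}C_{(g-9)/2}^{-1}$
already lies in $H$, so the real input to the argument is the single ``seed'' difference $\Gamma_{g-3}C_{(g-9)/2}^{-1}$, together with the two symmetries $T$ and $\sigma$.

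I would import from the proof of Theorem~\ref{t1} the set $\mathcal{G}=\{(a,b)\in\mathcal{S}\times\mathcal{S}:AB^{-1}\in H\}$ and re-verify that it is still closed under symmetry, transitivity, and conjugation by any element of $H$. This reduces the theorem to an orbit-chasing problem: starting from the single pair $(\gamma_{g-3},c_{(g-9)/2})\in\mathcal{G}$, apply the symmetries $T$ and $\sigma$ (and use transitivity) to reach the pairs $(a_1,a_2)$ and $(b_1,b_2)$, and separately to place the Dehn twist $E$ itself in $H$.

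The orbit-chasing proceeds in two stages. First, since $T$ is the order-$g$ rotation of Figure~\ref{MO}, conjugating the seed by successive powers of $T$ cyclically permutes the indices and yields a whole family of pairs $(\gamma_i,c_j)\in\mathcal{G}$; transitivity then supplies pairs of the pure types $(\gamma_i,\gamma_j)$ and $(c_i,c_j)$. Second, $\sigma$-conjugation exploits the explicit action of $\sigma$ listed just before the theorem: $\sigma$ fixes $x_i$ for $i\in\{1,g-1\}\cup\{4,\ldots,g-3\}$ and swaps only a short list of curves, including $\sigma(a_1)=f$ and $\sigma(b_r)=c_{r-1}$. Applying $\sigma$ to the pairs from the first stage produces pairs tying the $c_i$'s and $\gamma_i$'s to the $a_i$'s, $b_i$'s, $e$ and $f$; chaining with transitivity should close the equivalence class and deliver $(a_1,a_2),(b_1,b_2)\in\mathcal{G}$.

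The hardest part will be recovering the single Dehn twist $E$ (as opposed to a difference). Following the strategy used inside Theorem~\ref{t1}, I expect to apply a lantern-type relation to a cleverly chosen quadruple of curves whose pairwise differences already lie in $H$; this rewrites one Dehn twist as a three-term product of known differences, after which $E$ itself can be placed in $H$ by conjugating by a suitable element of $H$ that carries the auxiliary curve onto $e$. The hypothesis $g\ge 13$ enters twice: it provides enough $\sigma$-fixed middle indices $x_4,\ldots,x_{g-3}$ to guarantee that the $T$-orbit of the seed reaches every required curve class, and it leaves enough room on $N_g$ to run the lantern relation for $E$. The main obstacle is therefore the careful combinatorial identification of $T^k(\gamma_{g-3})$ and $T^k(c_{(g-9)/2})$ for each $k$ in terms of the named curves, plus the choice of the correct configuration for the lantern step; once those are pinned down, both the symmetry and the transitivity of $\mathcal{G}$ do the rest of the work automatically.
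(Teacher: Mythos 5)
Your skeleton matches the paper's: reduce to the generators $T,A_1A_2^{-1},B_1B_2^{-1},E$ of Theorem~\ref{t1}, extract the seed $\Gamma_{g-3}C_{\frac{g-9}{2}}^{-1}=\sigma\cdot\bigl(\sigma\Gamma_{g-3}C_{\frac{g-9}{2}}^{-1}\bigr)$, run the equivalence relation $\mathcal{G}$, and finally produce $E$ by a conjugation onto $e$. The genuine gap is in the engine you propose for the middle stage. Conjugating the seed only by powers of $T$ (and by $\sigma$) moves \emph{both} coordinates of a pair simultaneously, so the resulting pairs $(T^k(\gamma_{g-3}),T^k(c_{\frac{g-9}{2}}))$ do not share entries, and transitivity has nothing to bite on; in particular your claim that this family plus transitivity yields the pure pairs $(\gamma_i,\gamma_j)$ and $(c_i,c_j)$ is unjustified. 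What actually drives the paper's proof is bootstrapping with the full $H$-invariance of $\mathcal{G}$: each newly acquired element of $H$, typically a product of differences such as $(\Gamma_6A_1^{-1})(C_2\Gamma_{10}^{-1})$ or $(C_4C_2^{-1})(B_4B_2^{-1})$, is itself applied to a known pair so as to move one curve while fixing the other, e.g. $(\Gamma_6A_1^{-1})(C_2\Gamma_{10}^{-1})(\gamma_6,a_1)=(c_2,a_1)$; only after such steps do pairs with a common entry appear, and the chain of transitivities leading to $(b_1,b_2)$ and $(a_2,a_1)$ can be assembled. You state this closure property at the outset but never use it in the orbit-chasing you describe, and without it the plan stalls. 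A further caution: conjugating $AB^{-1}$ by $\sigma$ reverses the local orientations and returns $A'^{-1}B'$ rather than $A'B'^{-1}$, so $\sigma$-conjugation does not obviously preserve $\mathcal{G}$; the paper avoids this issue by never using $\sigma$ in the orbit-chasing at all ($\sigma$ enters only to extract the seed and at the very last step).

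On the $E$ step your instinct is close but heavier than needed: no new lantern configuration has to be found. Once $T$, $A_1A_2^{-1}$ and $B_1B_2^{-1}$ lie in $H$, the tail of the proof of Theorem~\ref{t1} (where the lantern relation already produces the single twist $A_3$, hence $A_1,B_i,C_i$ by conjugation with $T$) puts the individual twists in $H$, and then $E$ comes from the single observation $A_1\sigma(a_1)=A_1(f)=e$, i.e. $E^{\pm1}=(A_1\sigma)A_1(A_1\sigma)^{-1}\in H$. So the missing idea is the bootstrapping mechanism in the middle stage (together with the explicit identifications of $T^k$-images that you correctly flag as the combinatorial work); the rest of your outline is consistent with the paper's argument.
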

\begin{proof}
Consider the surface $N_g$ as in Figure~\ref{sigmao}.
Since 
\[
\sigma(\gamma_{g-3})=\gamma_{g-3} \textrm{ and }\sigma(c_{\frac{g-9}{2}})=c_{\frac{g-9}{2}},
\]
and $\sigma$ reverses the orientation of a neighbourhood of a two-sided simple closed curve, we have
\[
\sigma \Gamma_{g-3}\sigma =\Gamma_{g-3}^{-1} \textrm{ and }\sigma C_{\frac{g-9}{2}}\sigma=C_{\frac{g-9}{2}}^{-1}.
\]
Hence, it is easy to verify that  $\sigma\Gamma_{g-3}C_{\frac{g-9}{2}}^{-1}$ is an involution.
Let $H$ be the subgroup of $\mathcal{T}_g$ generated by the following set 
\[
\{T,\sigma, \sigma\Gamma_{g-3}C_{\frac{g-9}{2}}^{-1} \},
\]
where $g\geq13$ and odd. It follows from Theorem~\ref{t1} that we only need to prove that the elements $A_1A_{2}^{-1}$, $B_1B_{2}^{-1}$ and $E$ are contained in the subgroup $H$.\\
Since
\[
\Gamma_{g-3}C_{\frac{g-9}{2}}^{-1}=(\sigma)(\sigma\Gamma_{g-3}C_{\frac{g-9}{2}}^{-1}),	
\]
the element $\Gamma_{g-3}C_{\frac{g-9}{2}}^{-1}$ belongs to $H$.\\
\noindent
It follows from 
\begin{itemize}
\item$T^{13-g}(\gamma_{g-3},c_{\frac{g-9}{2}})=(\gamma_{10},c_{2})$,
\item $T^{-4}(\gamma_{10},c_2)=(\gamma_{6},a_{1})$ and
\item $T^{2}(\gamma_{6},a_1)=(\gamma_{8},c_{1})$
\end{itemize}
that the elements $\Gamma_{10}C_{2}^{-1}$, $\Gamma_{6}A_{1}^{-1}$ and $\Gamma_{8}C_{1}^{-1}$ are in $H$. Since 
\[
(\Gamma_6A_{1}^{-1})(C_2\Gamma_{10}^{-1})(\gamma_6,a_1)=(c_2,a_1),
\]
the element $C_2A_{1}^{-1}$ is in $H$. Since
\begin{itemize}
\item $(\Gamma_6A_{1}^{-1})(A_1C_{2}^{-1})=\Gamma_6C_{2}^{-1}\in H$ and
\item $T^4(\gamma_6,c_2)=(\gamma_{10},c_4)$,
\end{itemize}
$\Gamma_{10}C_{4}^{-1}$ is contained in $H$. Also, we have
\[
(C_4\Gamma_{10}^{-1})( \Gamma_{10}C_{2}^{-1})=C_4C_{2}^{-1}\in H.
\]
 Thus,
$C_3C_{1}^{-1}$, $C_3C_{5}^{-1}$, $B_4B_{2}^{-1}$ and $C_2A_{1}^{-1}$ are contained in $H$ by conjugating $C_4C_{2}^{-1}$ with some powers of $T$. Then, since
\[
(C_4C_{2}^{-1})(B_4B_{2}^{-1})(c_2,a_1)=(b_2,a_1),
\]
$H$ contains $B_2A_{1}^{-1}$. Also, we get 
\[
(C_{2}A_{1}^{-1})(A_1B_{2}^{-1})=C_{2}B_{2}^{-1} \in H. 
\]
Since
\[
C_1C_{3}^{-1}B_2B_{4}^{-1}(c_3,c_5)=(b_4,c_5),
\]
$H$ contains $B_4C_{5}^{-1}$. Then, 
\[
 T^{-4}(b_4,c_5)=(b_2,c_3)
\]
implies that $B_2C_{3}^{-1}\in H$. It follows from the following equalities
\begin{itemize}
\item $C_2C_{3}^{-1}=(C_2B_{2}^{-1})(B_2C_{3}^{-1})$ and
\item $T^{-2}(b_2,c_3)=(b_1,b_{2})$
\end{itemize}
that $B_1B_{2}^{-1}$ belongs to $H$. On the other hand, since
\begin{itemize}
\item $(\Gamma_8C_{1}^{-1})(C_1C_{3}^{-1})(C_3C_{5}^{-1})(C_5B_{4}^{-1})=\Gamma_8B_{4}^{-1}$ and
\item $T^{-7}(\gamma_8,b_4)=(\gamma_1,a_{1})=(a_2,a_1)$
\end{itemize}
that $A_1A_{2}^{-1}$ is contained in $H$.\\
\noindent
Since the elements $T$, $A_1A_{2}^{-1}$ and $B_1B_{2}^{-1}$ are contained in the subgroup $H$, the generators $A_1,B_1,C_1,\ldots, B_{r-1},C_{r-1}, B_r$ are in $H$ by the proof of Theorem~\ref{thm1}. Moreover, 
\[
A_1\sigma(a_1)=A_1(f)=e
\]
leads to $E\in H$. This completes the proof.
\end{proof}
\begin{theorem}\label{reven}
For even $g\geq14$, the twist subgroup $\mathcal{T}_{g}$ is generated by the 
three elements  $T,\sigma$ and $\sigma\Gamma_{g-3}C_{\frac{g-8}{2}}^{-1}$.
\end{theorem}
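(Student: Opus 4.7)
The plan is to adapt the proof of Theorem~\ref{rodd} essentially verbatim, compensating for three differences: the rotation $T$ now has order $g-1$ rather than $g$, the involution $\sigma$ maps $b_r$ to $d_r$ rather than $c_{r-1}$, and the target generating set from Theorem~\ref{t1} contains the additional generator $D_r$. First I would verify that $\sigma\Gamma_{g-3}C_{\frac{g-8}{2}}^{-1}$ is an involution: reading off Figure~\ref{sigmae}, the curves $\gamma_{g-3}$ and $c_{\frac{g-8}{2}}$ are disjoint and each fixed by $\sigma$, and since $\sigma$ is orientation-reversing in a neighbourhood of a two-sided curve, the conjugation relation gives $\sigma\Gamma_{g-3}\sigma=\Gamma_{g-3}^{-1}$ and $\sigma C_{\frac{g-8}{2}}\sigma=C_{\frac{g-8}{2}}^{-1}$, from which squaring the element and using commutativity of disjoint twists yields the identity.

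Let $H$ be the subgroup generated by $\{T,\sigma,\sigma\Gamma_{g-3}C_{\frac{g-8}{2}}^{-1}\}$. Multiplying the third generator on the left by $\sigma$ immediately gives $\Gamma_{g-3}C_{\frac{g-8}{2}}^{-1}\in H$, and conjugating by appropriate powers of $T$ produces a supply of difference elements $\Gamma_k C_\ell^{-1}$ obtained by the cyclic action of $T$ on $(\gamma_{g-3},c_{\frac{g-8}{2}})$. From here I would mimic the chain of identities in the odd case, shifting every $c$-index upward by one to reflect that the starting index is $(g-8)/2$ rather than $(g-9)/2$: combine two of the $\Gamma C^{-1}$ elements to extract products of the form $C_iC_j^{-1}$ and $C_iA_1^{-1}$, then iterate conjugation by $T$ together with transitivity to produce $B_iB_j^{-1}$, $B_iC_j^{-1}$, $B_iA_1^{-1}$, and ultimately $B_1B_2^{-1}$ and $A_1A_2^{-1}$. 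Since $T\in H$ and these two difference twists are in $H$, the argument in the proof of Theorem~\ref{t1} places all Dehn twists $A_1,B_1,C_1,\ldots,B_{r-1},C_{r-1},B_r$ inside $H$.

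It remains to obtain the two generators specific to the even case. With $B_r\in H$, the relation $\sigma(b_r)=d_r$ together with $\sigma$ being orientation-reversing yields $\sigma B_r\sigma^{-1}=D_r^{-1}$, so $D_r\in H$. For $E$, the element $A_1\sigma\in H$ sends $a_1$ to $e$ (since $\sigma(a_1)=f$ and $A_1(f)=e$) and is orientation-reversing at $a_1$, hence $(A_1\sigma)A_1(A_1\sigma)^{-1}=E^{-1}$, so $E\in H$. Together with the previous step this covers all four generators required by Theorem~\ref{t1}, giving $H=\mathcal{T}_g$.

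The main obstacle is bookkeeping: verifying that each individual conjugation by a specific power of $T$ produces exactly the claimed curve pair with the correct indices under the new starting shift, and that for $g\geq 14$ all intermediate indices remain inside the valid range so that no curve degenerates. A secondary subtlety is re-inspecting Figure~\ref{sigmae} to confirm that the particular curves named above really are fixed by $\sigma$ rather than swapped — the even case has slightly different symmetry data (for instance $\sigma(x_3)=x_4$ is present, which it is not in the odd case), so the set of $\sigma$-invariant curves has to be verified independently rather than simply inherited from the odd argument.
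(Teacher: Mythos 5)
Your proposal is correct and follows essentially the same route as the paper: obtain $\Gamma_{g-3}C_{\frac{g-8}{2}}^{-1}$ by multiplying by $\sigma$, push it down by a power of $T$ into the low-index region, rerun the odd-genus computation to get $A_1A_{2}^{-1}$, $B_1B_{2}^{-1}$ and hence all the twists of Theorem~\ref{t1}, and then produce $E$ from $A_1\sigma$ and $D_r$ from $\sigma(b_r)=d_r$. The only (cosmetic) difference is that no index shift is actually needed: the paper notes that $T^{13-g}$ carries $(\gamma_{g-3},c_{\frac{g-8}{2}})$ to exactly $(\gamma_{10},c_{2})$, the same pair as in the proof of Theorem~\ref{rodd}, so the odd-case chain of identities is reused verbatim rather than with your proposed shift of the $c$-indices.
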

\begin{proof}
Consider the surface $N_g$ as in Figure~\ref{sigmae}.
Since 
\[
\sigma(\gamma_{g-3})=\gamma_{g-3} \textrm{ and }\sigma(c_{\frac{g-8}{2}})=c_{\frac{g-8}{2}},
\]
and $\sigma$ reverses the orientation of a neighbourhood of a two-sided simple closed curve, we have
\[
\sigma \Gamma_{g-3}\sigma =\Gamma_{g-3}^{-1} \textrm{ and }\sigma C_{\frac{g-8}{2}}\sigma=C_{\frac{g-8}{2}}^{-1}.
\]
Hence, it is easy to show that  $\sigma\Gamma_{g-3}C_{\frac{g-9}{2}}^{-1}$ is an involution.
Let $K$ be the subgroup of $\mathcal{T}_g$ generated by the following set 
\[
\{T,\sigma, \sigma\Gamma_{g-3}C_{\frac{g-8}{2}}^{-1} \},
\]
where $g\geq14$ and even.\\
\noindent
Since
\[
\Gamma_{g-3}C_{\frac{g-8}{2}}^{-1}=(\sigma)(\sigma\Gamma_{g-3}C_{\frac{g-8}{2}}^{-1}),	
\]
the element $\Gamma_{g-3}C_{\frac{g-8}{2}}^{-1}$ is contained in $K$.\\
\noindent
It follows from 
\[
T^{13-g}(\gamma_{g-3},c_{\frac{g-8}{2}})=(\gamma_{10},c_{2})
\]
that the elements $\Gamma_{10}C_{2}^{-1}$ is in the subgroup $K$. Recall that this element also appears in the proof of Theorem~\ref{rodd}. The remaining part of the proof follows as in the proof of the previous theorem. Also, note that the element $D_r$ belongs to $K$ since $\sigma(b_r)=d_r$. This finishes the proof.
\end{proof}


\end{document}